
\documentclass[letterpaper, 10 pt, conference]{ieeeconf}  

\IEEEoverridecommandlockouts                              

\overrideIEEEmargins                                      



\usepackage{amsmath} 
\usepackage{amssymb}  
\usepackage{color}
\usepackage{graphicx}
\DeclareMathOperator{\e}{\mathrm{e}}

\newtheorem{lemma}{Lemma}
\newtheorem{cor}{Corollary}

\title{\LARGE \bf
Impulsive feedback control for dosing applications
}

\author{Alexander Medvedev$^{1}$, Anton Proskurnikov$^{2}$, and Zhanybai T. Zhusubaliyev$^{3,4}$
\thanks{* This work was partially supported the Swedish Research Council under grant 2019-04451.}
\thanks{$^{1}$Department of Information Technology, 
        Uppsala University, SE-752 37 Uppsala, Sweden
        [{\tt\small alexander.medvedev@it.uu.se}]}%
\thanks{$^{2}$Department of Electronics and Telecommunications, Politecnico di Torino, Turin, Italy, 10129 [{\tt\small anton.p.1982@ieee.org}]}%
\thanks{$^{3}$Department of Computer Science, International Scientific Laboratory for
Dynamics of Non-Smooth Systems, Southwest State University, Kursk, Russia
        [{\tt\small zhanybai@hotmail.com}]}%
\thanks{$^{4}$Faculty of Mathematics and Information Technology, Osh State University, Lenin st. 331, 723500, Osh, Kyrgyzstan        }%
}

\begin{document}

\maketitle
\thispagestyle{empty}
\pagestyle{empty}

\begin{abstract}
This paper addresses a design procedure of pulse-modulated feedback control solving a dosing problem defined for implementation in a manual mode. Discrete dosing, as a control strategy, is characterized by exerting  control action on the plant in impulsive manner at certain time instants. Dosing 
 applications appear primarily in chemical industry and medicine where the control signal constitutes a sequence of (chemically or pharmacologically) active substance quantities (doses) administered to achieve a desired result. When the doses and the instants of their administration are adjusted as functions of some measured variable, a feedback control loop exhibiting nonlinear dynamics arises. The impulsive character of the interaction between the controller and the plant makes the resulting closed-loop system non-smooth.  Limitations of the control law with respect to control goals are discussed. An application of the approach at hand to neuromuscular blockade in closed-loop anesthesia is considered in a numerical example.

\end{abstract}

\section{INTRODUCTION}
Continuous processes controlled via impulsive actions appear often in nature and technology. A simple example of such a control strategy is a patient taking medication to alleviate the symptoms of a disease according to a prescribed drug dosing regimen.  This is an open-loop control law that can be turned into a closed-loop one when the regimen is adjusted with respect to the achieved therapeutic effect~\cite{J83}. In chemical batch processes, discrete dosing is utilized to control the composition and properties of the product by introducing additives~\cite{D19}. Another accessible example of impulsive feedback control is hot air ballooning where the balloon altitude is maintained by  switching on and off a burner that heats the air inside it.

Feedback drug dosing~\cite{C89} predominately  follows the common practice in process control, namely stabilization (or tracking) of the plant dynamics around a set point by means of a discrete controller with a constant sampling time. This concept is implemented, e.g., in the artificial pancreas~\cite{DHL14}, where the controlled variable is measured by a continuous glucose monitor and the actuator is a continuous insulin infusion pump. Yet, the biological pancreas controls the blood glucose level in impulsive manner~\cite{Evans09} which fact motivates the development of impulsive control laws for the artificial pancreas, e.g.~\cite{RGS20}.  

Impulsive dosing control enhances the patient safety, because the control signal can be described in the same terms as in manual drug administration, i.e., in terms of doses and inter-dose periods.  The main hazards of the artificial pancreas are, for instance,  over- and underdosing of insulin~\cite{BKK16}. In impulsive control, the minimal and maximal doses can be explicitly specified whereas flow rate-based drug administration requires calculation of those.

Another established application of feedback drug dosing is closed-loop anesthesia \cite{IMK15} that constitutes an instance of truly multi-variable control in a medical setting. Once again, the known solutions are derived  from process control and implemented by continuously manipulating the flow rates of the anesthetic drugs delivered by pumps.

Mathematical description of therapeutic effect produced by a drug dose is provided by pharmacokinetic-pharnacodynamic  (PK/PD) models developed within pharmacology (pharmacometrics). 
After being introduced into the organism, the drug is spread in the blood and tissues. The temporal evolution  of  the drug concentrations and the drug transport between different sites (compartments)  is described by a PK-model whereas the effect of the drug is described by a PD-model. The drug will eventually clear out from the organism and its effect subside thus necessitating the administration of new doses to sustain the pharmacotherapy.  Linear PK-models give normally good approximations of actually measured concentrations.  Since the state variables of  a PK-model correspond to compartmental concentrations, such models are positive. The PD-models have however to be nonlinear to reflect the finite number of receptors that   the drug molecules can bind to. This often results in Wiener-type block-oriented models with a Hill-function nonlinearity.

This paper deals with the design of a closed-loop  dosing algorithm implemented as a pulse-modulated feedback. The main contribution is twofold. First, feasibility analysis of a dosing problem to keep the control output of the PK/PD plant within a predefined interval is performed. Second, a controller design procedure that renders a closed-loop solution with a given impulse weight (dose) and period with respect to a positive Wiener model is proposed.

The paper is composed as follows. First a continuous nonlinear PK/PD   mathematical model of a widely used in surgery drug is defined. Then a pulse-modulated feedback controller is  introduced rendering the closed-loop dynamics hybrid. Further, the problem of controller design is formulated and related to the considered dosing application.  Feasibility of controller design under limitations on the dose and interdose interval are discussed; a necessary feasibility condition (Corollary~\ref{cor.1}), and also a simple sufficient condition (Corollary~\ref{cor.2}) are proven. Further, we establish a necessary and sufficient condition ensuring the existence of a 1-cycle of period $T$, which is compatible with output corridor constraint (Lemma~\ref{lem:suffic}); this condition requires to find the maximum and minimum of some transcendental function on an interval.
Finally, a controller design for the closed-loop system to exhibit a predefined periodic solution is performed and the resulting dynamics are simulated. To illustrate the behaviors of the system under plant uncertainty, bifurcation analysis is performed with respect to the individualizable PK/PD model parameters.

\section{NEUROMUSCULAR  BLOCKADE}\label{sec:NMB}
Neuromuscular blockade (NMB) is used in anesthesia to  relax skeletal muscles of the patient  and prevent involuntary movement that could impact surgery. The effect of a NMB drug is reliably quantified by clinically validated sensors (monitors). Train-of-four (ToF) ratio sensors based on  acceleromyography are currently considered  as ``clinical gold standard"  while novel sensor technologies are sought for \cite{L21}.  

In this paper, a Wiener model of the muscle relaxant {\it atracurium} presented in  \cite{SWM12} is utilized as a clinically relevant example of a PK/PD model.  The linear (PK) third-order part of the model is given by the transfer function
\begin{equation}\label{eq:lin_NMB}
W(s)=\frac{\bar Y(s)}{U(s)}=\frac{v_1 v_2 v_3 \alpha^3}{(s+v_1\alpha)(s+v_2\alpha)(s+v_3\alpha)},
\end{equation}
where $\bar Y (s)$ is the Laplace transform of the (unmeasurable)   linear dynamic part  output $\bar y(t)$ and $U(s)$ is the Laplace transform of the input.  The parameter $0<\alpha\le  0.1$ is  patient-specific  and estimated from data whereas the  rest of the transfer function parameters are fixed, $v_1=1$, $v_2=4$, and $v_3=10$.  The structure of the linear block  reflects the fact that the PK-part comprises three cascaded compartments, each of them possessing  first-order dynamics.

The output signal measured by the ToF-monitor  $y(t)$  characterizes the muscular function (100\%-NMB depth) and is related to the output of the linear block via a Hill function capturing the PD-part of the model
\begin{equation}\label{eq:nonlin_NMB}
y(t)= \varphi(\bar y)\triangleq\frac{100 C_{50}^\gamma}{ C_{50}^\gamma + {\bar y}^\gamma(t)}.
\end{equation}
The patient-specific PD-parameter  $0<\gamma\le 10$ is also estimated from data and model \eqref{eq:lin_NMB}, \eqref{eq:nonlin_NMB} is particularized to a patient by the pair $(\alpha, \gamma)$. 
The drug concentration producing 50\% of the maximum effect is  $C_{50}=3.2425$ $ \mu \mathrm{ g} \ \mathrm{ml}^{-1} $. 

Without medication, i.e for $\bar y(t)\equiv 0$, the model output is $y(t)=100\%$.   From the clinical data in \cite[Fig.~4]{SWM12}, the muscular function  is to be kept within the range $2\% \le y(t) \le 10\%$ throughout the surgery. Other NMB  sensors might produce a different effect measure. To promptly induce NMB, a bolus dose  of {\it atracurium} (calculated as 400--500 $\mu \mathrm{ g}$ for a $\mathrm{kg}$ weight)  is administered to the patient  in the beginning of surgery. This is expected to provide adequate relaxation for about $15-35~\mathrm{min}$. When the desired level of paralysis is achieved, the output $y(t)$ is maintained within the range by administering a maintenance dose each $15-25~\mathrm{min}$. 

Neuromuscular blocking agents are considered high-risk medicines because they also paralyze the muscle necessary for breathing. Notably, the patient safety framework in NMB is established in terms of individual doses and dose intervals. Therefore, an impulsive control law for an NMB agent administration complies well with the patient safety regulation.

\section{PULSE-MODULATED FEEDBACK CONTROL}
To facilitate the forthcoming  controller design, rewrite the linear part of the PK/PD-model expressed by \eqref{eq:lin_NMB} as a state-space Wiener model
\begin{equation}                            \label{eq:1}
\dot{x}(t) =Ax(t), \quad \bar y(t)=Cx(t),
\end{equation}
where $$A=\begin{bmatrix} -a_1 &0 &0 \\ g_1 & -a_2 &0 \\ 0 &g_2 &-a_3 \end{bmatrix}, B=\begin{bmatrix} 1 \\ 0 \\ 0\end{bmatrix}, C =\begin{bmatrix} 0 &0 &1 \end{bmatrix},
$$
$a_i=v_i\alpha>0$ are  positive distinct constants, and $g_1,g_2>0$ are chosen to yield $g_1g_2=v_1v_2v_3\alpha^3$.
The output $y$ of \eqref{eq:nonlin_NMB} is controlled by a pulse-modulated controller that inflicts instantaneous jumps on the state vector
\begin{equation}                             \label{eq:2}
\begin{aligned}
x(t_n^+) &= x(t_n^-) +\lambda_n B, \quad                                          
t_{n+1} =t_n+T_n,
\\ 
T_n &=\Phi(\bar y(t_n)), \quad \lambda_n=F(\bar y(t_n)).
\end{aligned}
\end{equation}
The minus and plus in a superscript denote the left-sided and
a right-sided limit, respectively.
 The amplitude modulation function $F(\cdot)$ and frequency modulation
function $\Phi(\cdot)$ are continuous, strictly monotonic for positive arguments; $F(\cdot)$ is non-increasing
and $\Phi(\cdot)$ is non-decreasing,
\begin{equation}                             \label{eq:2a}
0<\Phi_1\le \Phi(\cdot)\le\Phi_2, \quad 0<F_1\le F(\cdot)\le F_2,
\end{equation}
where $\Phi_1$, $\Phi_2$, $F_1$, $F_2$ are constant
numbers. Then \eqref{eq:2} constitutes a combined (frequency and amplitude) 
pulse modulation operator \cite{GC98} implementing an output feedback over \eqref{eq:1}. The time instant $t_n$ is called firing time 
and $\lambda_n$ represents the corresponding impulse weight. With respect to NMB model \eqref{eq:lin_NMB}, \eqref{eq:nonlin_NMB}, $\lambda_n$ is the drug dose administered  at the instant $t_n$, and $T_n $ is the interdose interval.  From \eqref{eq:2a}, it follows that a drug dose is administered at  earliest $\Phi_1$ and lastest $\Phi_2$   time units after the previous one. Importantly, the dose size is explicitly restricted to be not less than $F_1$ and not over $F_2$, which, for any controller operation mode, prevents both under- and overdosing.

The matrix $A$ is Metzler and Hurwitz stable, whereas $B,C$ satisfy the relationship
 \begin{equation}\label{CBLB}
 CB=0.
 \end{equation}
These properties are essential in PK/PD modeling and render  positivity of the elements of  $x$ that stand for drug concentrations in the model compartments, imply that the effect of the drug fading away in time, and, finally, guarantee that the effect of a drug dose does not immediately affect the output.

\section{IMPULSIVE CONTROL PROBLEM}\label{sec:problems}
Closed-loop system \eqref{eq:1}, \eqref{eq:2} coincides  with the structure of the impulsive Goodwin's oscillator, see \cite{Aut09}.
It is known from \cite{Aut09} that the impulsive Goodwin's oscillator does not have equilibria and exhibits only positive oscillatory periodic or non-periodic (e.g. chaotic or quasi-periodic) solutions. The dynamically simplest periodic solution type is so-called 1-cycle arising when the feedback controller generates a sequence of impulses of same amplitude that are distributed equidistantly over time, i.e  $\forall n: \lambda_n=\lambda, T_n=T$.

Two dosing problems have been solved within the impulsive feedback  control framework. One of them is motivated by the  necessity of  sustained administration  a certain maintenance dose after the state of NMB is induced. 
\paragraph*{Control problem 1~\cite{MPZh23,MPZh23a}} Given plant model \eqref{eq:1}, together with the dose $\lambda$ and the interdose interval $T$, design the modulation functions $\Phi(y)$ and $F(y)$  in \eqref{eq:2} so that closed-loop system \eqref{eq:1}, \eqref{eq:2}  exhibits an orbitally stable  1-cycle characterized by the desired parameters.

\paragraph*{Control problem 2~\cite{MPZh24}} Given plant model \eqref{eq:1} and the constants $0<y_{\min}<y_{\max}$, design the modulation functions $\Phi(y)$ and $F(y)$  in \eqref{eq:2} so that closed-loop system \eqref{eq:1}, \eqref{eq:2} exhibits an orbitally stable 1-cycle and the (steady) system output stays within the prescribed range
\begin{equation}\label{eq:range-open}
y_{\min}\le y(t)\le y_{\max}. 
\end{equation}

Control Problem~1 has been solved with respect to a linear time-invariant plant. The initial analysis of
Control Problem~2 provided in~\cite{MPZh24} addresses the more general Wiener model.

In contrast with classical process control, the design problems formulated above deal with a desired periodic solution rather than a desired stationary point.
With $X_n\triangleq x(t_n^-)$, the  discrete-time dynamics of $X_n$ are given by
\begin{equation} \label{eq:1c}
X_{n+1}= \e^{(t_{n+1}-t_n)A}(X_n+\lambda_n B),\quad n=0,1,\ldots
\end{equation}
The knowledge of $X_n$ allows to uniquely recover the trajectory on the interval $(t_n,t_{n+1})$ via~\eqref{eq:1} and~\eqref{eq:2}:
\begin{equation} \label{eq:1d}
x(t)=\e^{(t-t_n)A}(X_n+\lambda_n B),\quad t\in(t_n,t_{n+1}).
\end{equation}
Now, for a 1-cycle, by applying \eqref{eq:1c}, one has
\begin{equation}\label{eq:1-cycle}
    X=Q(X), \quad Q(\xi)\triangleq\mathrm{e}^{A\Phi(C\xi)}\left( \xi+ F(C\xi)B \right),
\end{equation}
that is, $X$ is the fixed point of the map $Q(\cdot)$. 
This completely characterizes~\cite{MPZh23,MPZh23a} the 1-cycle with the parameters $\Phi(\bar y_0)=T$, $F(\bar y_0)=\lambda$, where $\bar y_0=CX$. For such a cycle, $X=X_n\,\forall n$ is uniquely found from~\eqref{eq:1-cycle}, rewritten as
\[
X=\mathrm{e}^{A\Phi(\bar y_0)}\left(X+ F(\bar y_0)B \right)=\mathrm{e}^{TA}\left(X+ \lambda B \right)
\]
and entailing the relation
\begin{equation}\label{eq.1-cycle-X}
X=\lambda(I-\e^{TA})^{-1}B.     
\end{equation}
The remaining points of the 1-cycle orbit are recovered from~\eqref{eq:1d}.

The Jacobian of the map $Q(\cdot)$ defined in~\eqref{eq:1-cycle} at the fixed point $X$ is evaluated as
\begin{equation}\label{eq:closed_loop}
    Q^\prime(X)= \e^{AT}+KC,
\end{equation}
where
\begin{equation}\label{eq:gain}
    K= \begin{bmatrix}
    J &D
\end{bmatrix}\begin{bmatrix}
 F^\prime(\bar y_0)  \\  \Phi^\prime(\bar y_0) 
\end{bmatrix}, J=\e^{AT}B, D=AX
\end{equation}
The 1-cycle corresponding to $X$ is then orbitally stable if $Q^\prime(X)$ is Schur stable. 
\section{FEASIBILITY ANALYSIS}

Feasibility analysis of Control problem~2 defined in Section~\ref{sec:problems} is performed here for a case when the cycle parameters are constrained to certain intervals $T_{\min}\le T \le T_{\max}$ and $0\le \lambda \le \lambda_{\max}$ due to safety or implementation reasons. 

\subsection{Necessary condition}

One can expect that, to reach a desired therapeutic effect, the parameters of 1-cycle $(\lambda,T)$ should satisfy some constraints to be compatible with the output corridor in~\eqref{eq:range-open}. If the drug is administered rarely, then each dose needs to be large enough to sustain the effect. On the other hand, more frequent drug administration requires lesser dosing. In fact, a more general statement can be proved for a general and, possibly, aperiodic solution of~\eqref{eq:1},\eqref{eq:2}.
\begin{lemma}\label{lem.necess}
Consider a solution to  impulsive system~\eqref{eq:1},\eqref{eq:2} such that $\lambda_n\leq \lambda^*$ and $T_n\geq T_*>0$ for all $n$. Then, the ultimate lower output bound  holds
\begin{equation}\label{eq.upper-bound1}
\limsup_{t\to\infty}\bar y(t)\leq 
\frac{g_1g_2}{a_2a_3}\frac{\lambda^*}{1-\e^{-a_1T_*}}.
\end{equation}
Similarly, if $\lambda_n\geq \lambda_*\geq 0$ and $T_n\leq T^*$ for all $n$, then
\begin{equation}\label{eq.lower-bound1}
\liminf_{t\to\infty}\bar y(t)\geq 
\frac{g_1g_2}{a_2a_3}\frac{\lambda_*\e^{-a_1T^*}}{1-\e^{-a_1T^*}}.
\end{equation}
\end{lemma}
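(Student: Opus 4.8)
The plan is to exploit the lower-triangular cascade structure of $A$: the impulses in~\eqref{eq:2} act only on the first state component (since $B$ has zero second and third entries, and in particular $CB=0$), while $x_2$ and $x_3$ evolve continuously, driven respectively by $x_1$ and $x_2$ through the stable first-order relations $\dot x_2=g_1x_1-a_2x_2$ and $\dot x_3=g_2x_2-a_3x_3$. I would therefore first derive an ultimate bound on $x_1$, and then propagate it through two elementary asymptotic-gain estimates to reach $\bar y=x_3$.

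\emph{Scalar impulsive subsystem.} The component $x_1$ obeys $\dot x_1=-a_1x_1$ between firings with jumps $\lambda_n$, so, with $\xi_n\triangleq x_1(t_n^-)$, one has the scalar recursion $\xi_{n+1}=(\xi_n+\lambda_n)\e^{-a_1T_n}$ (equivalently, the first row of~\eqref{eq:1c}), and $x_1$ is nonincreasing between firings, whence $x_1(t)\le\xi_n+\lambda_n$ on $[t_n,t_{n+1})$ and $\inf_{[t_n,t_{n+1})}x_1=\xi_{n+1}$. For the upper bound I would use $\lambda_n\le\lambda^*$ and $\e^{-a_1T_n}\le\e^{-a_1T_*}=:\rho<1$ to obtain $\xi_{n+1}\le(\xi_n+\lambda^*)\rho$, compare this with the autonomous affine contraction $w_{n+1}=(w_n+\lambda^*)\rho$, $w_0=\xi_0$ (the comparison is legitimate because $x\mapsto(x+\lambda^*)\rho$ is increasing and all quantities are nonnegative by positivity of the plant), and invoke $w_n\to\lambda^*\rho/(1-\rho)$; this gives $\limsup_n\xi_n\le\lambda^*\rho/(1-\rho)$ and hence $\limsup_t x_1(t)\le\limsup_n(\xi_n+\lambda_n)\le\lambda^*\rho/(1-\rho)+\lambda^*=\lambda^*/(1-\e^{-a_1T_*})=:M_1$. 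For the lower bound, symmetrically, $\lambda_n\ge\lambda_*$ and $\e^{-a_1T_n}\ge\e^{-a_1T^*}=:\sigma$ give $\xi_{n+1}\ge(\xi_n+\lambda_*)\sigma$; comparing from below with $z_{n+1}=(z_n+\lambda_*)\sigma$, $z_0=\xi_0$, and using $z_n\to\lambda_*\sigma/(1-\sigma)$, one obtains $\liminf_t x_1(t)=\liminf_n\xi_n\ge\lambda_*\e^{-a_1T^*}/(1-\e^{-a_1T^*})=:m_1$.

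\emph{Cascade propagation.} Since $x_2,x_3$ receive no impulses they are absolutely continuous, and I would invoke the elementary fact that a solution of $\dot v=-av+w(t)$ with $a>0$ satisfies $\limsup_t v\le\tfrac1a\limsup_t w$ and $\liminf_t v\ge\tfrac1a\liminf_t w$. Applied to $v=x_2$, $a=a_2$, $w=g_1x_1$ this gives $\limsup_t x_2\le g_1M_1/a_2$ and $\liminf_t x_2\ge g_1m_1/a_2$; applied once more to $v=x_3$, $a=a_3$, $w=g_2x_2$ it yields $\limsup_t\bar y\le\frac{g_1g_2}{a_2a_3}M_1$ and $\liminf_t\bar y\ge\frac{g_1g_2}{a_2a_3}m_1$, which are precisely~\eqref{eq.upper-bound1} and~\eqref{eq.lower-bound1}.

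\emph{Main obstacle.} The only genuinely delicate step is the scalar impulsive recursion in the $\liminf$ direction: one cannot merely appeal to forward invariance of the set $\{\xi_n\ge m_1\}$, since $\xi_0$ need not lie in it and the data $(\lambda_n,T_n)$ could a priori keep $\xi_n$ below $m_1$ for a while; the remedy is the monotone comparison with the autonomous lower-bounding recursion $z_n$, which converges to $m_1$ regardless of $z_0$. One must also locate the extrema of $x_1$ on each inter-impulse interval at the endpoints (immediate from $\dot x_1=-a_1x_1$, and using $T_*>0$ so that $t_n\to\infty$) and pass to the limit $\varepsilon\to0$ when chaining the three $\limsup$/$\liminf$ inequalities; these are routine.
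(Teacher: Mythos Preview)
Your argument is correct and is precisely the cascade-decomposition approach that the paper's proof defers to: the paper does not spell out the steps but simply invokes~\cite[Proposition~1]{ZCM12b}, whose proof proceeds exactly by bounding the scalar impulsive $x_1$-recursion and propagating the resulting $\limsup/\liminf$ through the two first-order stable channels to $x_2$ and $\bar y=x_3$. One very minor remark: in the lower-bound half you justify $t_n\to\infty$ via ``$T_*>0$'', but that hypothesis is absent there; the correct appeal is to $T_n\geq\Phi_1>0$ from~\eqref{eq:2a}, which holds for every solution of~\eqref{eq:1},\eqref{eq:2}.
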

\begin{proof}
The proof retraces the arguments in the proof of~\cite[Proposition~1]{ZCM12b}. A closer look at the proof in~\cite{ZCM12b} reveals that~\eqref{eq.upper-bound1} follows from the inequalities $\lambda_n=F(CX_n)\leq\lambda^*\,\forall n$
and $t_{n+1}-t_n=T_n=\Phi(CX_n)\geq T_*\,\forall n$ (which in~\cite{ZCM12b} are replaced by more conservative estimates
$\lambda_n\leq F_2$ and $T_n\geq\Phi_1$). Similarly,~\eqref{eq.upper-bound1} is entailed by 
the inequalities $\lambda_n\geq\lambda_*$
and $T_n\leq T^*$ (which in~\cite{ZCM12b} are replaced by estimates
$\lambda_n\geq F_1$ and $T_n\leq\Phi_2$).
\end{proof}

Lemma~\ref{lem.necess} does not involve the fixed point $X$ of the map $Q$ and, therefore, applies also in transient mode. 
Applying the result to 1-cycle, the following corollary is obtained.
\begin{cor}\label{cor.1}
Assume that there exists a 1-cycle with the parameters $\lambda_n\equiv \lambda$ and $T_n\equiv T$ that satisfies~\eqref{eq:range-open}. 
Then,
\begin{equation}\label{eq:necess_bounds}
    \frac{a_2a_3}{g_1g_2}\bar y_{\min}(1-\e^{-a_1T})\leq\lambda\leq \frac{a_2a_3}{g_1g_2}\bar y_{\max}(\e^{a_1T}-1),
\end{equation}
where, by definition,\footnote{Recall that function $\varphi$ is decreasing, hence, $\bar y_{\min}<\bar y_{\max}$.}
$\bar y_{\min}\triangleq\varphi^{-1}(y_{\max})$ and $\bar y_{\max}\triangleq\varphi^{-1}(y_{\min})$. In particular,
if $\lambda\leq\lambda_{\max}$ and $T\geq T_{\min}$, then
\[
\lambda_{\max}\geq \frac{a_2a_3}{g_1g_2}\bar y_{\min}(1-\e^{-a_1T_{\min}}).
\]
\end{cor}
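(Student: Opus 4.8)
The plan is to derive the corollary directly from Lemma~\ref{lem.necess} by specializing it to a 1-cycle, where all impulse weights and all interdose intervals coincide. For a 1-cycle one has $\lambda_n\equiv\lambda$ and $T_n\equiv T$, so \emph{both} hypotheses of Lemma~\ref{lem.necess} are in force simultaneously, with $\lambda^*=\lambda_*=\lambda$ and $T_*=T^*=T$. This immediately yields the two-sided chain
\[
\frac{g_1g_2}{a_2a_3}\frac{\lambda\e^{-a_1T}}{1-\e^{-a_1T}}\;\le\;\liminf_{t\to\infty}\bar y(t)\;\le\;\limsup_{t\to\infty}\bar y(t)\;\le\;\frac{g_1g_2}{a_2a_3}\frac{\lambda}{1-\e^{-a_1T}}.
\]

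Next I would translate the corridor constraint~\eqref{eq:range-open} into bounds on $\bar y$. Since $\varphi$ is strictly decreasing, $y(t)\in[y_{\min},y_{\max}]$ is equivalent to $\bar y(t)=\varphi^{-1}(y(t))\in[\varphi^{-1}(y_{\max}),\varphi^{-1}(y_{\min})]=[\bar y_{\min},\bar y_{\max}]$ at every $t$ along the cycle. Because a 1-cycle is a periodic trajectory, $\limsup_{t\to\infty}\bar y(t)$ and $\liminf_{t\to\infty}\bar y(t)$ are just the maximum and the minimum of $\bar y$ over one period; in particular $\limsup_{t\to\infty}\bar y(t)\ge\bar y_{\min}$ and $\liminf_{t\to\infty}\bar y(t)\le\bar y_{\max}$.

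Combining the two ingredients finishes the argument. Chaining $\bar y_{\min}\le\limsup_{t\to\infty}\bar y(t)\le \frac{g_1g_2}{a_2a_3}\frac{\lambda}{1-\e^{-a_1T}}$ and rearranging gives the lower bound $\lambda\ge\frac{a_2a_3}{g_1g_2}\bar y_{\min}(1-\e^{-a_1T})$; chaining $\frac{g_1g_2}{a_2a_3}\frac{\lambda\e^{-a_1T}}{1-\e^{-a_1T}}\le\liminf_{t\to\infty}\bar y(t)\le\bar y_{\max}$ and using $\frac{1-\e^{-a_1T}}{\e^{-a_1T}}=\e^{a_1T}-1$ gives the upper bound $\lambda\le\frac{a_2a_3}{g_1g_2}\bar y_{\max}(\e^{a_1T}-1)$, which is exactly~\eqref{eq:necess_bounds}. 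For the ``in particular'' claim, $\lambda\le\lambda_{\max}$ together with the lower bound yields $\lambda_{\max}\ge\frac{a_2a_3}{g_1g_2}\bar y_{\min}(1-\e^{-a_1T})$, and since $a_1>0$ the map $T\mapsto 1-\e^{-a_1T}$ is increasing, so $T\ge T_{\min}$ makes the right-hand side at least $\frac{a_2a_3}{g_1g_2}\bar y_{\min}(1-\e^{-a_1T_{\min}})$.

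There is no real obstacle here: the statement is essentially bookkeeping on top of Lemma~\ref{lem.necess}. The only points that require care are the order reversal induced by the decreasing Hill nonlinearity --- keeping $\bar y_{\min}=\varphi^{-1}(y_{\max})$ and $\bar y_{\max}=\varphi^{-1}(y_{\min})$ the right way around --- and the remark that for a periodic solution the pointwise corridor bounds transfer to the $\limsup$ and $\liminf$ quantities appearing in Lemma~\ref{lem.necess}.
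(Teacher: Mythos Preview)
Your proof is correct and follows essentially the same approach as the paper: specialize Lemma~\ref{lem.necess} to the 1-cycle with $\lambda_*=\lambda^*=\lambda$, $T_*=T^*=T$, translate the corridor on $y$ to the corridor $[\bar y_{\min},\bar y_{\max}]$ on $\bar y$ via the decreasing Hill map, and combine with the $\limsup$/$\liminf$ bounds to obtain~\eqref{eq:necess_bounds}; the ``in particular'' clause then follows by monotonicity of $T\mapsto 1-\e^{-a_1T}$. The paper's own proof is slightly terser but the logic is identical.
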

\begin{proof}
The first statement follows immediately from Lemma~\ref{lem.necess}, assuming that $\lambda_*=\lambda^*=\lambda$ and $T^*=T_*=T$.
Since the output satisfies~\eqref{eq:range-open}, one obtains
\[
\begin{gathered}
\bar y_{\min}\leq \limsup_{t\to\infty}\bar y(t)\leq\frac{g_1g_2}{a_2a_3}\frac{\lambda}{1-\e^{-a_1T}},\\
\bar y_{\max}\geq \liminf_{t\to\infty}\bar y(t)\geq\frac{g_1g_2}{a_2a_3}\frac{\lambda}{\e^{a_1T}-1},
\end{gathered}
\]
which proves the first statement. The second statement is now straightforward from~\eqref{eq:necess_bounds}.
\end{proof}

\subsection{Sufficient condition}

We will now obtain two conditions, ensuring that, for a given period $T$, an amplitude $\lambda\in [0,\lambda_{\max}]$ exists such that the 1-cycle with parameters $(\lambda,T)$ satisfies output constraint~\eqref{eq:range-open}. Notice that these conditions \emph{do not} imply  stability of the 1-cycle, which needs to be tested separately.

The first condition resembles Corollary~\ref{cor.1} and is also based on Lemma~\ref{lem.necess}.
\begin{cor}\label{cor.2}
If $T$ and $\lambda_{\max}$ satisfy the inequalities
\begin{equation}\label{eq:1-cycle-suffic-simple1}
\begin{gathered}
\e^{a_1T}\bar y_{\min}<\bar y_{\max},\\
\lambda_{\max}\geq \frac{a_2a_3}{g_1g_2}\bar y_{\min}(\e^{a_1T}-1),
\end{gathered}
\end{equation}
then there exists $\lambda\in[0,\lambda_{\max}]$ such that the 1-cycle with parameters $(\lambda,T)$ obeys~\eqref{eq:range-open}.
\end{cor}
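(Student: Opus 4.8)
The plan is to turn the output-corridor requirement~\eqref{eq:range-open} into a pair of scalar inequalities on the cycle amplitude $\lambda$ and then to exhibit a concrete admissible $\lambda$, using Lemma~\ref{lem.necess} in its degenerate equal-parameter form to bound simultaneously the peak and the trough of the 1-cycle output.

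First I would record the standard translation: since $\varphi$ in~\eqref{eq:nonlin_NMB} is strictly decreasing, constraint~\eqref{eq:range-open} along the 1-cycle is equivalent to $\bar y_{\min}\le\bar y(t)\le\bar y_{\max}$ for all $t$, with $\bar y_{\min}=\varphi^{-1}(y_{\max})$ and $\bar y_{\max}=\varphi^{-1}(y_{\min})$ as in Corollary~\ref{cor.1}. The 1-cycle with parameters $(\lambda,T)$ is well defined for every $T>0$ and every $\lambda\ge0$: its sampled point $X$ is given by~\eqref{eq.1-cycle-X} (the inverse exists because $A$ is Hurwitz, hence $\e^{TA}$ is Schur and $I-\e^{TA}$ is nonsingular), the orbit is recovered from~\eqref{eq:1d}, and, since $CB=0$ by~\eqref{CBLB}, the output $\bar y(\cdot)$ is \emph{continuous} and $T$-periodic along the orbit, so its supremum and infimum over a period are attained and coincide, respectively, with $\limsup_{t\to\infty}\bar y(t)$ and $\liminf_{t\to\infty}\bar y(t)$.

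Next I would apply Lemma~\ref{lem.necess} to this 1-cycle with $\lambda_*=\lambda^*=\lambda$ and $T_*=T^*=T$. Using $\e^{-a_1T}/(1-\e^{-a_1T})=1/(\e^{a_1T}-1)$, bounds~\eqref{eq.upper-bound1},~\eqref{eq.lower-bound1} become
\[
\frac{g_1g_2}{a_2a_3}\,\frac{\lambda}{\e^{a_1T}-1}\le\min_t\bar y(t),\qquad
\max_t\bar y(t)\le\frac{g_1g_2}{a_2a_3}\,\frac{\lambda}{1-\e^{-a_1T}}.
\]
Hence it suffices to produce $\lambda\in[0,\lambda_{\max}]$ for which the left-hand bound is $\ge\bar y_{\min}$ and the right-hand bound is $\le\bar y_{\max}$.

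Finally I would take the explicit value $\lambda=\dfrac{a_2a_3}{g_1g_2}\,\bar y_{\min}(\e^{a_1T}-1)$. It is nonnegative (as $\e^{a_1T}>1$ and $\bar y_{\min}>0$) and, by the second inequality in~\eqref{eq:1-cycle-suffic-simple1}, does not exceed $\lambda_{\max}$, so $\lambda\in[0,\lambda_{\max}]$. With this choice the first displayed bound equals $\bar y_{\min}$, whence $\min_t\bar y(t)\ge\bar y_{\min}$, and the second equals $\bar y_{\min}\,\dfrac{\e^{a_1T}-1}{1-\e^{-a_1T}}=\bar y_{\min}\e^{a_1T}$, which is $<\bar y_{\max}$ by the first inequality in~\eqref{eq:1-cycle-suffic-simple1}, so $\max_t\bar y(t)\le\bar y_{\max}$. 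Passing back through $\varphi$ gives~\eqref{eq:range-open} for the 1-cycle with parameters $(\lambda,T)$. I do not anticipate a genuine obstacle here; the only points demanding care are the degenerate invocation of Lemma~\ref{lem.necess} on a single periodic orbit — identifying $\limsup/\liminf$ with the orbit extrema, which rests on periodicity and on the continuity of $\bar y$ granted by $CB=0$ — and the algebraic identity $(\e^{a_1T}-1)/(1-\e^{-a_1T})=\e^{a_1T}$ that ties the upper estimate to the first hypothesis. As the statement already cautions, nothing above speaks to orbital stability, which must be verified separately via Schur stability of $Q^\prime(X)$ in~\eqref{eq:closed_loop}.
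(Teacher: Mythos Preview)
Your proposal is correct and follows essentially the same approach as the paper: choose the explicit amplitude $\lambda=\dfrac{a_2a_3}{g_1g_2}\bar y_{\min}(\e^{a_1T}-1)$, invoke Lemma~\ref{lem.necess} with $\lambda_*=\lambda^*=\lambda$ and $T_*=T^*=T$, and combine the resulting upper and lower bounds with the two hypotheses in~\eqref{eq:1-cycle-suffic-simple1}. Your write-up is in fact more careful than the paper's, making explicit the well-posedness of the 1-cycle, the continuity of $\bar y$ via $CB=0$, and the algebraic identity $(\e^{a_1T}-1)/(1-\e^{-a_1T})=\e^{a_1T}$ that the paper leaves implicit.
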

\begin{proof}
Recall that the output of any 1-cycle $\bar y(t)$ is a continuous periodic function, hence,
\[
\limsup_{t\to\infty}\bar y(t)=\max_{t\geq 0}\bar y(t),\quad \liminf_{t\to\infty}\bar y(t)=\min_{t\geq 0}\bar y(t).
\]
Choosing now $\lambda$ as follows
\[
\lambda=\frac{a_2a_3}{g_1g_2}\bar y_{\min}(\e^{a_1T}-1),
\]
one guarantees that $0\leq\lambda\leq\lambda_{\max}$ and, by virtue of~\eqref{eq.lower-bound1} (with $\lambda_*=\lambda^*=\lambda$ and $T^*=T_*=T$), one has $\min_{t\geq 0}\bar y(t)\geq \bar y_{\min}$. Applying~\eqref{eq.lower-bound1}, one shows that
\[
\max_{t\geq 0}\bar y(t)\leq \bar y_{\min}e^{a_1T}\leq y_{\max}.
\]
Hence, we have constructed 1-cycle that obeys~\eqref{eq:range-open}.
\end{proof}

An apparent disadvantage of the condition in Corollary~\ref{cor.2} is the restriction on $T$  that is often violated in practice.
The following lemma gives a more subtle condition, being, in fact, both necessary and sufficient condition, yet requires computing the extreme values of a non-convex transcendental function on a line segment.
\begin{lemma}\label{lem:suffic}
The 1-cycle with parameters $(\lambda,T)$ satisfies  output constraint~\eqref{eq:range-open} if and only if 
\begin{equation}\label{eq:1-cycle-suffic}
\begin{gathered}
\bar y_{\min}\leq\lambda\xi_T(\tau)\leq\bar y_{\max}\quad\forall\tau\in[0,T],\\
\xi_T(\tau)\triangleq Ce^{\tau A}(I-e^{TA})^{-1}B=Ce^{\tau A}\xi_T(0).
\end{gathered}
\end{equation}
In particular, a cycle with period $T$ satisfying~\eqref{eq:range-open} exists if and only if
\begin{equation}\label{eq:1-cycle-suffic2}
\frac{\max_{\tau\in[0,T]}\xi_T(\tau)}{\min_{\tau\in[0,T]}\xi_T(\tau)}\leq\frac{\bar y_{\max}}{\bar y_{\min}},
\end{equation}
and the minimal dose $\lambda$ that is needed to satisfy~\eqref{eq:range-open} is
\begin{equation}
\lambda_{\mathrm{opt}}=\frac{\bar y_{\min}}{\min_{\tau\in[0,T]}\xi_T(\tau)}.
\end{equation}
\end{lemma}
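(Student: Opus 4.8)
The plan is to exploit the explicit trajectory description already available from~\eqref{eq:1d} together with the 1-cycle state formula~\eqref{eq.1-cycle-X}. First I would observe that for a 1-cycle with parameters $(\lambda,T)$, the state just before the firing at $t_0=0$ is $X=X_0=\lambda(I-\e^{TA})^{-1}B$ by~\eqref{eq.1-cycle-X}, and consequently, by~\eqref{eq:1d}, the trajectory on one period is $x(t)=\e^{tA}(X+\lambda B)$ for $t\in(0,T)$. Using $\e^{tA}(X+\lambda B)=\e^{tA}\bigl(\lambda(I-\e^{TA})^{-1}B+\lambda B\bigr)=\lambda\e^{tA}(I-\e^{TA})^{-1}\e^{TA}B$ after combining the two terms over the common denominator, and then re-indexing the time variable, one recognizes that $\bar y(t)=Cx(t)=\lambda\,C\e^{\tau A}(I-\e^{TA})^{-1}B$ for an appropriate $\tau\in[0,T]$; this is exactly $\lambda\xi_T(\tau)$. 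The key bookkeeping point here is the identity $\e^{TA}(I-\e^{TA})^{-1}=(I-\e^{TA})^{-1}\e^{TA}=(I-\e^{TA})^{-1}-I$, which lets one absorb the ``$+\lambda B$'' jump into a time-shift, so that as $t$ ranges over a full period the value $\tau$ also ranges over all of $[0,T]$ (one should check the endpoints correspond to $x(0^-)$ and $x(T^-)$ via $CB=0$ from~\eqref{CBLB}, so there is no jump discontinuity in the output at firing times and the closed interval is the correct range).

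Next I would use the fact, already invoked in the proof of Corollary~\ref{cor.2}, that the 1-cycle output $\bar y(t)$ is continuous and $T$-periodic, so that the output constraint~\eqref{eq:range-open} over all $t\ge 0$ is equivalent to $\bar y_{\min}\le \min_{t}\bar y(t)$ and $\max_t\bar y(t)\le \bar y_{\max}$, which by the previous paragraph is equivalent to $\bar y_{\min}\le \lambda\xi_T(\tau)\le\bar y_{\max}$ for all $\tau\in[0,T]$. (Here one uses that $\varphi$ is decreasing, so the output-corridor condition $y_{\min}\le y(t)\le y_{\max}$ translates to $\bar y_{\min}=\varphi^{-1}(y_{\max})\le\bar y(t)\le\varphi^{-1}(y_{\max})=\bar y_{\max}$, as already noted in Corollary~\ref{cor.1}.) This establishes the ``if and only if'' in~\eqref{eq:1-cycle-suffic}.

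For the existence statement~\eqref{eq:1-cycle-suffic2} and the optimal dose, I would note that $\xi_T(\tau)>0$ on $[0,T]$: this follows from positivity of the closed-loop system, or directly from $\xi_T(\tau)=C\e^{\tau A}\sum_{k\ge 0}\e^{kTA}B=C\sum_{k\ge 0}\e^{(\tau+kT)A}B$, where each term $C\e^{sA}B$ is the nonnegative impulse response of the positive (Metzler-$A$, nonnegative $B,C$) system and is strictly positive for $s>0$ because the system is reachable/observable in the relevant sense; the series converges since $A$ is Hurwitz. Given $\xi_T>0$, the double inequality $\bar y_{\min}\le\lambda\xi_T(\tau)\le\bar y_{\max}$ for all $\tau$ is simply $\bar y_{\min}/\min_\tau\xi_T(\tau)\le\lambda\le\bar y_{\max}/\max_\tau\xi_T(\tau)$, and this interval of admissible $\lambda$ is nonempty precisely when $\bar y_{\min}/\min_\tau\xi_T(\tau)\le\bar y_{\max}/\max_\tau\xi_T(\tau)$, i.e.~when~\eqref{eq:1-cycle-suffic2} holds; the smallest feasible $\lambda$ is then the left endpoint $\lambda_{\mathrm{opt}}=\bar y_{\min}/\min_\tau\xi_T(\tau)$.

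The main obstacle I anticipate is not any single hard estimate but rather getting the algebra of the time-shift absolutely right, in particular verifying that the closed interval $[0,T]$ (not the open one) is exactly the set of values taken by the shifted argument $\tau$, which hinges on the no-jump-at-firing property $CB=0$ so that $\bar y(0^-)=\bar y(0^+)$ and the periodic output is genuinely continuous on all of $\mathbb{R}$. A secondary, more technical point is the strict positivity of $\xi_T(\tau)$ on the \emph{closed} interval including $\tau=0$; here one may simply note $\xi_T(0)=C(I-\e^{TA})^{-1}B=C\sum_{k\ge1}\e^{kTA}B>0$ for the same positivity reason, so that the quotient $\max\xi_T/\min\xi_T$ in~\eqref{eq:1-cycle-suffic2} is well defined and finite.
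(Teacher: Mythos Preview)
Your proposal is correct and follows essentially the paper's own argument: express $\bar y(t_n+\tau)=Ce^{\tau A}(X+\lambda B)$ via~\eqref{eq:1d}, identify it with $\lambda\xi_T(\tau)$, use $CB=0$ to include the endpoints $\tau=0,T$, and then handle the second statement exactly through the positivity/interval reasoning you describe. One minor correction: your displayed identity has a sign slip (the correct relation is $(I-e^{TA})^{-1}-I=(I-e^{TA})^{-1}e^{TA}$, not $+I$), but the time re-indexing detour is unnecessary anyway, since the fixed-point relation $X=e^{TA}(X+\lambda B)$ yields $X+\lambda B=\lambda(I-e^{TA})^{-1}B$ directly, which is precisely what the paper substitutes.
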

\begin{proof}
Recall that 1-cycle is a solution satisfying $t_{n+1}-t_n=T_n\equiv T$ and $X_n=X(t_n^-)\equiv X$, where $X=X(\lambda,T)$ is found from~\eqref{eq.1-cycle-X}. Substituting this in~\eqref{eq:1d}, one finds that
\[
y(t_n+\tau)=Cx(t_n+\tau)=\lambda Ce^{\tau A}(I-e^{TA})^{-1}B=\lambda\xi_T(\tau)
\]
for all $\tau\in(0,T)$. Furthermore, for 1-cycle (as well as any other solution), the coordinate
$y=Cx(t)=x_3(t)$ remains continuous, because $CB=0$. Hence, the latter relation holds also for $\tau=0$ and $\tau=T$.
Since the output $y(t)$ of the 1-cycle is a $T$-periodic function,  condition~\eqref{eq:range-open} is equivalent to~\eqref{eq:1-cycle-suffic}. This proves the first part of the lemma.

The second statement of the lemma follows from the first one. If the first condition in~\eqref{eq:1-cycle-suffic} is satisfied, then
\[
\frac{\lambda\max_{\tau\in[0,T]}\xi_T(\tau)}{\lambda\min_{\tau\in[0,T]}\xi_T(\tau)}\leq \frac{\bar y_{\max}}{\bar y_{\min}},
\]
which is equivalent to~\eqref{eq:1-cycle-suffic2}; this proves the ``only if'' part. Furthermore, it is obvious that~\eqref{eq:range-open} and~\eqref{eq:1-cycle-suffic2} can hold only when $\lambda\geq\lambda_{\mathrm{opt}}$.
To prove the ``if'' part, choose
$\lambda=\lambda_{\mathrm{opt}}$. Then, by construction, the output of the 1-cycle $\bar y(t)=\lambda\xi_T(t)$ attains the minimal value
\[
\min_{t\in[0,T]}\bar y(t)=\lambda_{\mathrm{opt}}\min_{t\in[0,T]]}\xi_T(t)=\bar y_{\min},
\]
whereas its maximum, in view of~\eqref{eq:1-cycle-suffic2}, does not exceed $\bar y_{\max}$. Recalling that $\bar y(t)$ is a continuous $T$-periodic function, it satisfies~\eqref{eq:range-open}, which finishes the proof.
\end{proof}

\section{DESIGN}

In this section, the output nonlinearity is assumed to be incorporated in the modulation functions, i.e.
\begin{equation}\label{eq:modulation}
F(\bar y)=  (\bar F\circ \varphi)(\bar y), \Phi(\bar y)= (\bar \Phi\circ \varphi)(\bar y),
\end{equation}
where $\circ$ is the composition of functions.

To illustrate the use of impulsive feedback control law \eqref{eq:2} in NMB, consider model \eqref{eq:lin_NMB},  \eqref{eq:nonlin_NMB} where the individualization parameters are set to the mean population values $\alpha=0.0374$, $\gamma=2.6677$, and $C_{50}=3.2425$, \cite{RMM14}.

Control Problem~2 formulated in Section~\ref{sec:problems} is treated with respect to this model in \cite{MPZh24}. Being designed to keep the Wiener model output in the interval $2\% \le y(t) \le 10\%$, the controller renders a 1-cycle with the drug dose $\lambda=415.8412$ and the interdose interval $T=37.3834$. With respect to the values typical to clinical practice, the dose of {\it atracurium} is too high for NMB maintenance and administered too seldom. High doses of NMB agents are associated with adverse side effects and administering the drug seldom weakens the effect of the feedback. 

To pursue a more realistic dosing regimen, select $\lambda=300, T=20$ and solve Control Problem~1.
First, check the necessary condition in \eqref{eq:necess_bounds} that yields
\[
190.1695 <\lambda < 476.9292,
\]
and is satisfied. Notice also that this condition is also satisfied for the solution obtained in \cite{MPZh24}, although results in a dose outside of the clinically established interval.

The fixed point corresponding to the desired 1-cycle is obtained from \eqref{eq:1-cycle} and given by
\begin{equation}\label{eq:X}
    X^\intercal=\begin{bmatrix} 269.5974 &84.5819 &13.6249 \end{bmatrix},
\end{equation}
and thus implying $\bar y_0=13.6249$.

With 
\begin{equation}\label{eq:F_Phi_fixed_point}
    F^\prime(\bar y_0)=-0.15,\quad \Phi^\prime(\bar y_0)=0.29,
\end{equation}
 the eigenvalues of the Jacobian in \eqref{eq:closed_loop} are $\sigma(Q^\prime(X))=\{0.2288, 0.1863, 0.0003\}$, which guarantees local stability of the fixed point. As pointed out in \cite{MPZh23}, the slopes of the modulation functions evaluated at the fixed point play the role of feedback gains in the problem of Schur-stabilization of a linear time-invariant system by  output feedback. Then all the design methods developed for the latter are viable here. For instance, in \cite{MPZh23a}, an approach making use of Bilinear Matrix Inequality is utilized. Yet, only local stability in vicinity of the fixed point can be secured in this way.

In \eqref{eq:modulation}, the functions $\bar F (\cdot), \bar \Phi (\cdot)$ represent the design degrees of freedom and have to guarantee the desired characteristics of the 1-cycle in the closed-loop system as well as its (orbital) stability. Select these modulation functions as piecewise affine, i.e.
\begin{align*}
    \bar \Phi (\xi)= \begin{cases} \Phi_2 &   \Phi_2 < k_2\xi +k_1, \\
     k_2\xi +k_1 & \Phi_1 \le  k_2\xi +k_1 \le \Phi_2, \\
    \Phi_1  &  k_2\xi +k_1 < \Phi_1, 
     \end{cases}
\end{align*}
\begin{align*}
    \bar F (\xi)= \begin{cases} F_1 &  k_4\xi +k_3< F_1, \\
     k_4\xi +k_3 & F_1 \le k_4\xi +k_3 \le F_2, \\
    F_2 & F_2 <k_4\xi +k_3.
     \end{cases}
\end{align*}
Then, impulsive feedback \eqref{eq:2} is completely defined by the constants $k_i, i=1,\dots, 4$.
Assuming that $F^\prime(\cdot)$ and $\Phi^\prime(\cdot)$ are in linear interval and applying the chain rule gives
\begin{align}\label{eq:F_prime_Phi_prime}
     F^\prime(\bar y_0)&=\bar F^\prime(\bar y_0)\varphi^\prime(\bar y_0)= k_4 \varphi^\prime(\bar y_0),\\ 
     \Phi^\prime(\bar y_0)&= \bar \Phi^\prime(\bar y_0)\varphi^\prime(\bar y_0)= k_2 \varphi^\prime(\bar y_0), \nonumber
\end{align}
where
\[
\varphi^\prime(\xi)= -\frac{\gamma 100 C_{50}^\gamma  \xi^{\gamma-1}}{ {(C_{50}^\gamma + {\xi}^\gamma)}^2}, \quad \varphi^\prime(\bar y_0)=-0.4073.
\]
Thus, from \eqref{eq:F_Phi_fixed_point}  and \eqref{eq:F_prime_Phi_prime}, it follows that
\[ k_2=-0.7119 , \quad k_4=0.3682.
\]
Now, the rest of the coefficients of the modulation functions are obtained from
\begin{align*}
    F(\bar y_{0})&= (\bar F \circ \varphi)(\bar y_{0})=\bar F ( \varphi(\bar y_{0}))=k_4 \varphi(\bar y_{0})+k_3= \lambda,\\
    \Phi(\bar y_{0})&= (\bar \Phi \circ \varphi)(\bar y_{0})= \bar \Phi (\varphi(\bar y_{0}))= k_2 \varphi(\bar y_{0})+k_1=T.
\end{align*}
that yield 
\[
k_1=21.5133 , \quad k_3=299.2173. 
\]
\begin{figure}[ht]
\centering 
\includegraphics[width=0.9\linewidth]{./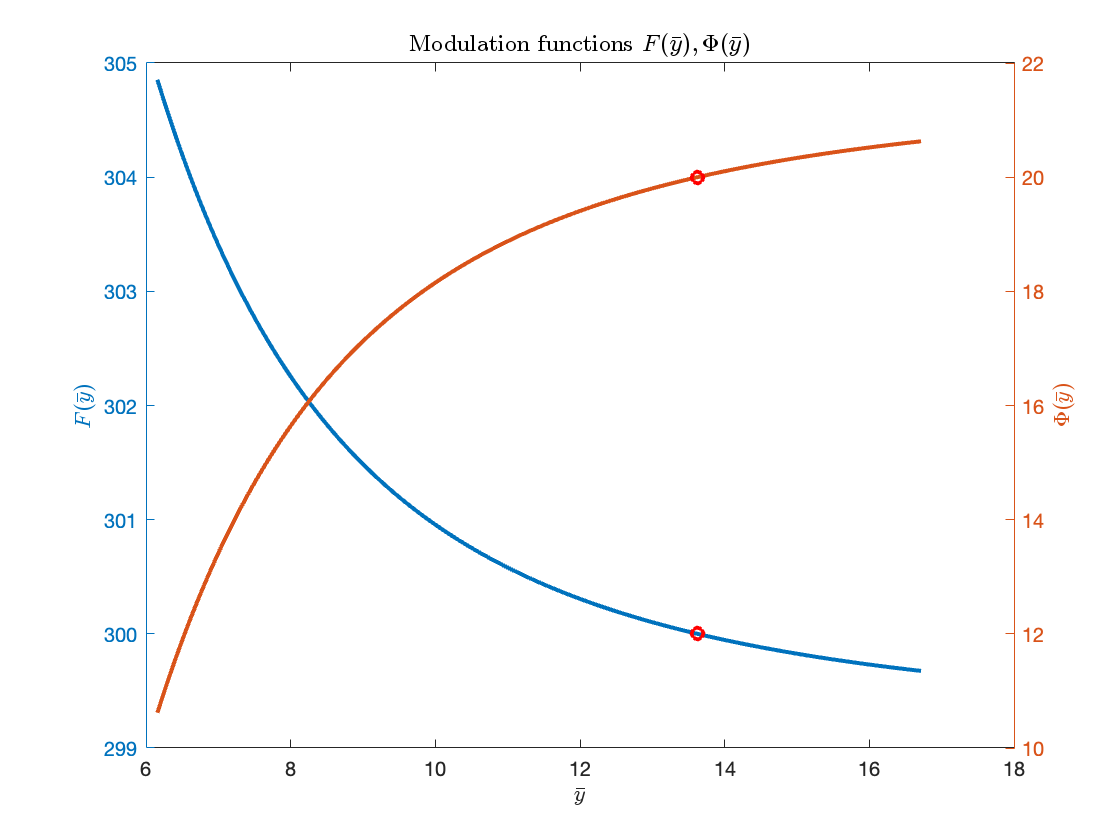}
\caption{The designed modulation functions $F(\bar y)$ (blue) and $\Phi(\bar y) $ (red). The desired cycle parameters $F(\bar y_0)=\lambda=300, \Phi(\bar y_0)=T=20$ are marked by red dot.}\label{fig:F_Phi}
\end{figure}
\begin{figure}[ht]
\centering 
\includegraphics[width=0.9\linewidth]{./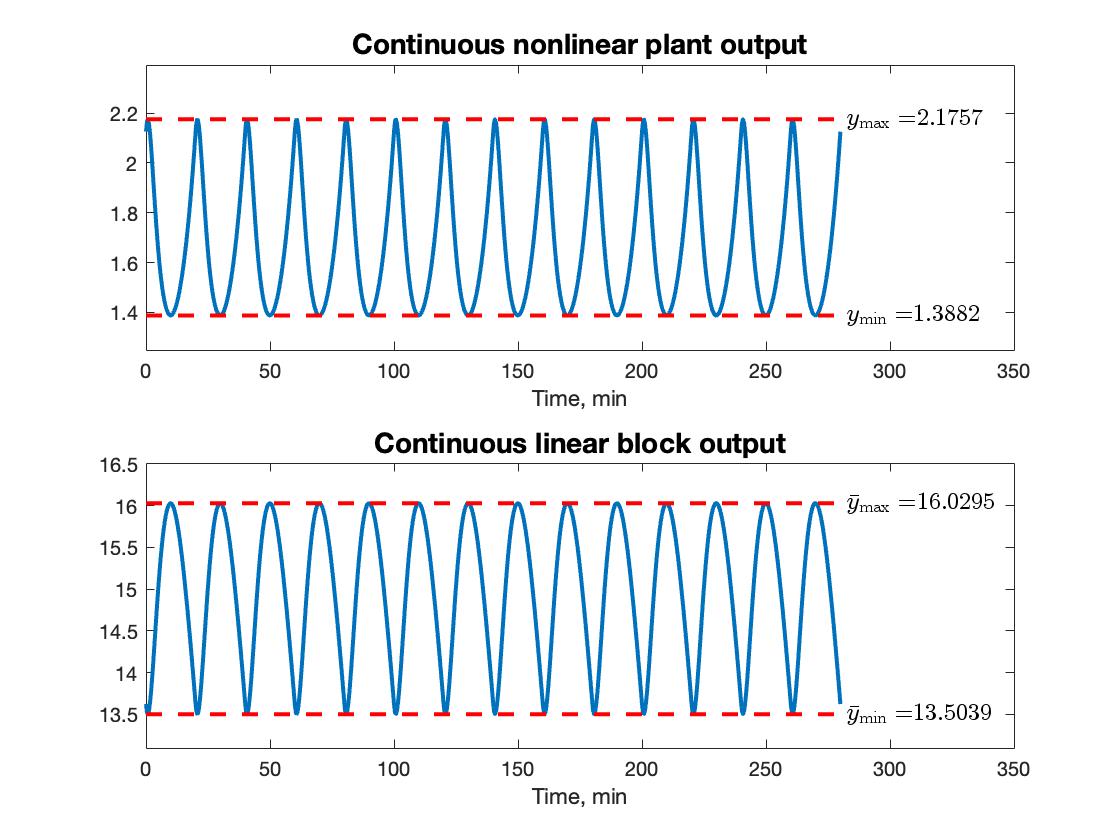}
\caption{The nonlinear output $y(t)$ (upper plot) and the linear block output $\bar y(t)$ (lower plot) in the designed 1-cycle. }\label{fig:1-cycle}
\end{figure}
\begin{figure}[ht]
\centering 
\includegraphics[width=0.9\linewidth]{./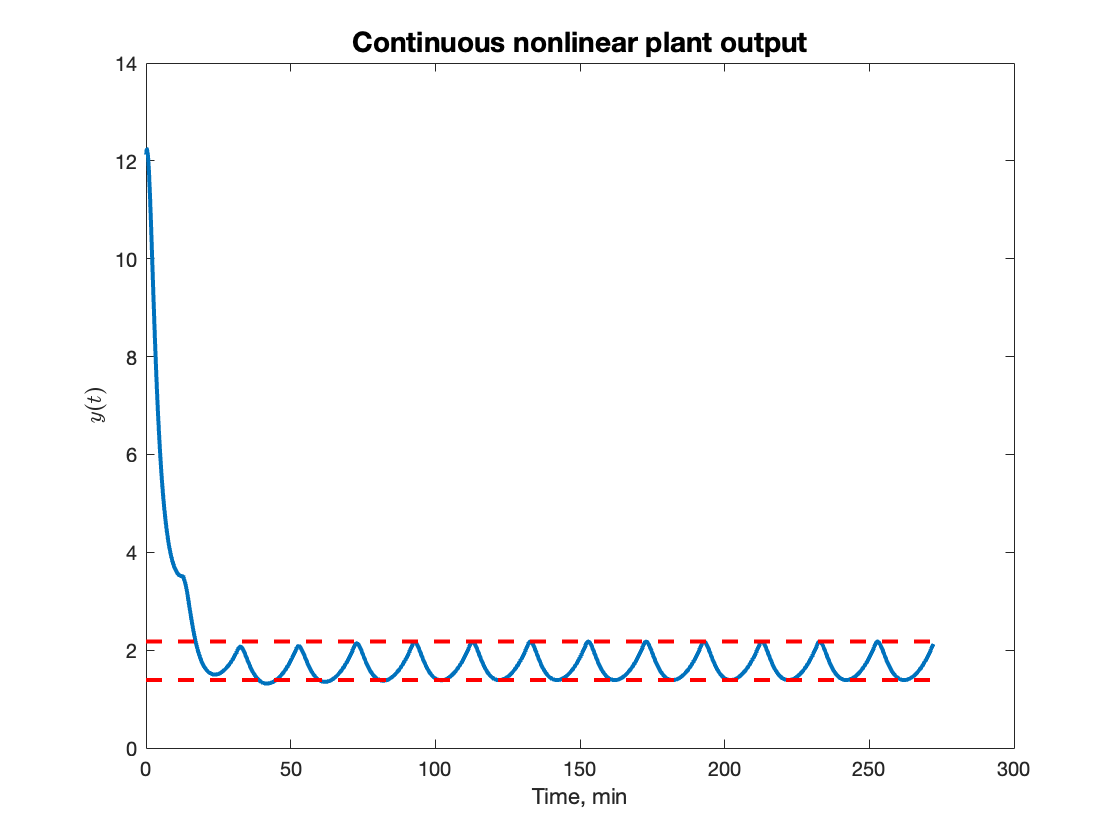}
\caption{The nonlinear output $y(t)$ in a transient to the designed 1-cycle. The 1-cycle is attractive. }\label{fig:transient}
\end{figure}
The resulting modulation functions are depicted in Fig.~\ref{fig:F_Phi}. Notice that \eqref{eq:F_prime_Phi_prime} is valid only when $F_i, \Phi_i, i=1,2$ are outside of the range of $\varphi(\cdot)$, which can always be satisfied.

Being initiated at the fixed point $X$, closed-loop system \eqref{eq:1}, \eqref{eq:2} exhibits a 1-cycle with the desired parameters $\lambda=300$, $T=20$, see Fig.~\ref{fig:1-cycle}. The maximal and minimal values of $y(t)$ and $\bar y(t)$ are calculated by applying the result of \cite[Proposition~2]{MPZh24}. Notably, selecting the mean values of the dose and interdose interval of {\it atracurium} resulted in the highest value of the muscular function $y(t)$ only slightly over the lower bound of the established clinical interval (i.e. $2\%$). Therefore, the actual value of $y(t)$ is  significantly below the lower bound of the clinical interval most of the time. Compared to the dosing regimen solution obtained in \cite{MPZh24} to keep $y(t)$ within a clinically suitable interval, the present strategy corresponds to drug overdosing. 

The designed 1-cycle is attractive. In Fig.~\ref{fig:transient}, a transient response of closed-loop system \eqref{eq:1}, \eqref{eq:2} is depicted. After the induction dose, an additional dose is needed to reach the stationary periodic orbit. 
\section{BIFURCATION ANALYSIS}
The dynamics of closed-loop system \eqref{eq:1}, \eqref{eq:2} are highly nonlinear \cite{ZCM12b} whereas the impulsive controller design in the previous section is based on a linearization in vicinity of a fixed point. To discern behaviors of the closed-loop dynamics under uncertainty in the plant parameters $\alpha, \gamma$, bifurcation analysis is performed.

The result of a bifurcation analysis for $0.0274<\alpha<0.04824$ is presented in Fig.~\ref{fig3}. The values of the rest of the NMB model parameters are kept as specified in Section~\ref{sec:NMB}.  At the point  $\alpha=\alpha_0$, 
$\alpha_0=0.0374$,  there is a stable fixed point specified by \eqref{eq:X} that undergoes a classical period-doubling bifurcation as  the parameter $\alpha$ is  increased. With further increase in the value of $\alpha$, the unstable  fixed point becomes a stable one in the border collision flip (or period-doubling) bifurcation.  As seen from the Fig.~\ref{fig3}, one of the periodic points of the 2-cycle collides with one of the borders. As a result, the 2-cycle changes its type. Such
a transition is known as ``persistence border collision''. 

With $\gamma$ in the range $1.403 <\gamma<5.5619$, there is only a stable fixed point that tends to the limit value $X_*^{\intercal}=\lbrack 242.126,\;76.5507,\;12.38252\rbrack$.
\begin{figure}[h!]
\centering
\parbox[c]{0.9\linewidth}{\includegraphics[width=\linewidth]{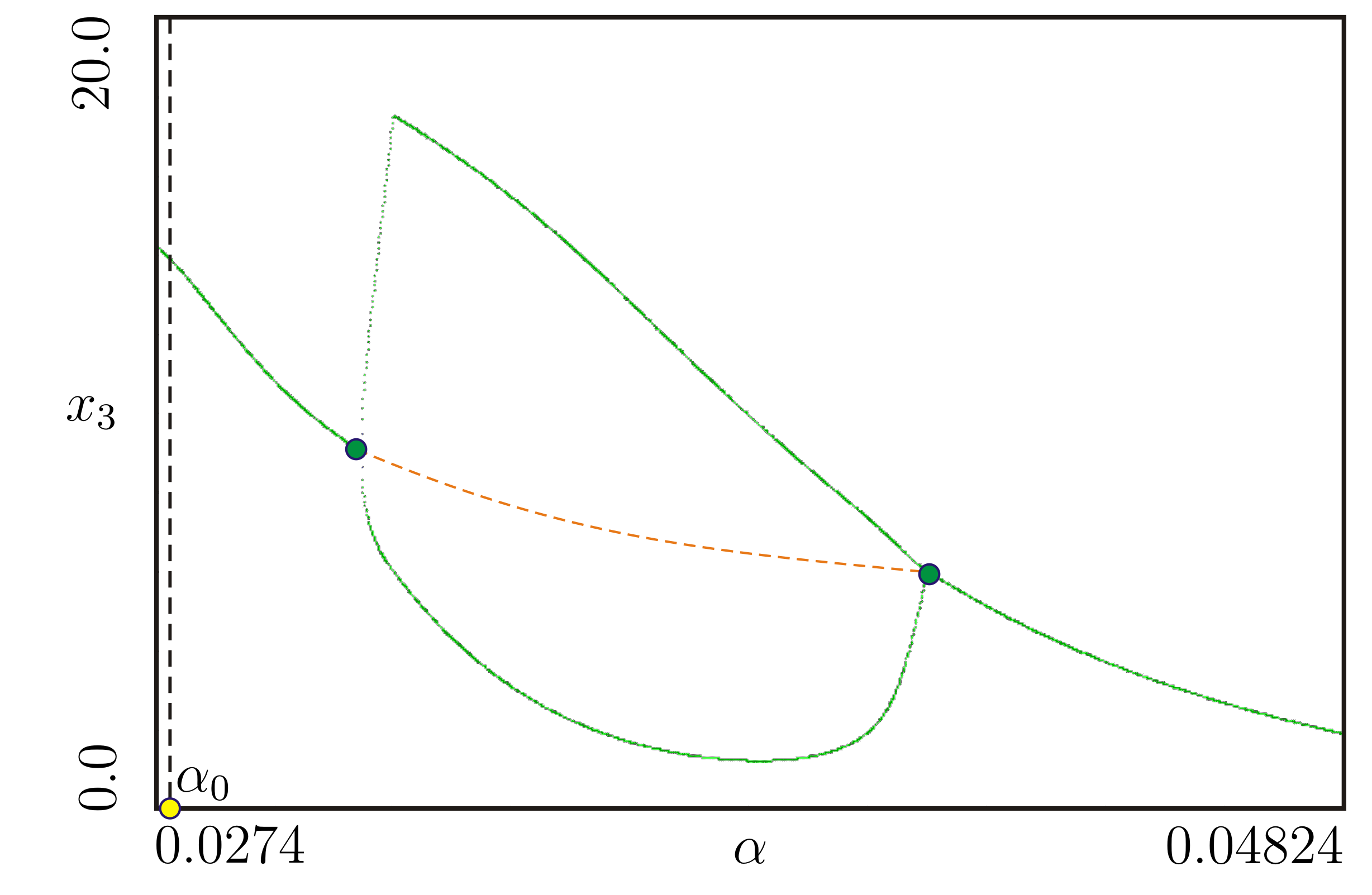}}\\
\caption{\label{fig3} Bifurcation diagram for
$0.0274<\alpha<0.04824$, $\alpha_0=0.0374$.}
\end{figure}
\section{CONCLUSIONS}
A pulse-modulated dosing feedback system suitable for applications in e.g. medicine and chemistry is considered. It can be designed directly from a description of (manual) dosing regimen, which facilitates controller validation and certification. This is in contrast with the conventional feedback control systems that are used in process control and designed to maximize closed-loop performance expressed as a function of the control error. The nonlinear and non-smooth dynamics arising in pulse-modulated control require complementing the design procedure with bifurcation analysis to ensure consistent system behavior under plant parameter uncertainty.

\bibliography{refs,observer}

\begin{thebibliography}{10}
\providecommand{\url}[1]{#1}
\csname url@samestyle\endcsname
\providecommand{\newblock}{\relax}
\providecommand{\bibinfo}[2]{#2}
\providecommand{\BIBentrySTDinterwordspacing}{\spaceskip=0pt\relax}
\providecommand{\BIBentryALTinterwordstretchfactor}{4}
\providecommand{\BIBentryALTinterwordspacing}{\spaceskip=\fontdimen2\font plus
\BIBentryALTinterwordstretchfactor\fontdimen3\font minus
  \fontdimen4\font\relax}
\providecommand{\BIBforeignlanguage}[2]{{%
\expandafter\ifx\csname l@#1\endcsname\relax
\typeout{** WARNING: IEEEtran.bst: No hyphenation pattern has been}%
\typeout{** loaded for the language `#1'. Using the pattern for}%
\typeout{** the default language instead.}%
\else
\language=\csname l@#1\endcsname
\fi
#2}}
\providecommand{\BIBdecl}{\relax}
\BIBdecl

\bibitem{J83}
I.~Janku, ``Formal theory of discrete drug dosing regimen adjustments,''
  \emph{Eur J Clin Pharmacol}, vol.~25, pp. 123--130, 1983.

\bibitem{D19}
J.~L. {de Miranda}, ``The design and scheduling of chemical batch processes:
  Computational complexity studies,'' \emph{Computers \& Chemical Engineering},
  vol. 121, pp. 367--374, 2019.

\bibitem{C89}
Y.~Chien, ``Rate-control drug delivery systems: controlled release vs.
  sustained release,'' \emph{Medical progress through technology}, vol.~15, no.
  1-2, pp. 21--46, 1989.

\bibitem{DHL14}
F.~J. Doyle~III, L.~M. Huyett, J.~B. Lee, H.~C. Zisser, and E.~Dassau,
  ``Closed-loop artificial pancreas systems: engineering the algorithms,''
  \emph{Diabetes care}, vol.~37, no.~5, pp. 1191--1197, 2014.

\bibitem{Evans09}
W.~S. Evans, L.~S. Farhy, and M.~L. Johnson, ``Biomathematical modeling of
  pulsatile hormone secretion: a historical perspective,'' in \emph{Methods in
  Enzymology: Computer methods, Volume A}, M.~L. Johnson and L.~Brand, Eds.,
  2009, vol. 454, pp. 345--366.

\bibitem{RGS20}
P.~S. Rivadeneira, J.~Godoy, J.~Sereno, P.~Abuin, A.~Ferramosca, and
  A.~González, ``Impulsive {MPC} schemes for biomedical processes: Application
  to type 1 diabetes,'' in \emph{Control Applications for Biomedical
  Engineering Systems}, A.~T. Azar, Ed.\hskip 1em plus 0.5em minus 0.4em\relax
  Academic Press, 2020, pp. 55--87.

\bibitem{BKK16}
H.~Blauw, P.~Keith-Hynes, R.~Koops, and J.~DeVries, ``A review of safety and
  design requirements of the artificial pancreas,'' \emph{Ann Biomed Eng.},
  vol.~44, no.~11, pp. 3158--3172, 2016.

\bibitem{IMK15}
C.~M. Ionescu, T.~F. Mendonca, and L.~Kovasc, ``Critically safe general
  anaesthesia in closed loop: Availability and challenges,''
  \emph{IFAC-PapersOnLine}, vol.~48, no.~20, pp. 551--556, 2015, 9th IFAC
  Symposium on Biological and Medical Systems BMS 2015.

\bibitem{L21}
W.~Lee, ``The latest trend in neuromuscular monitoring: return of the
  electromyography,'' \emph{Anesth Pain Med (Seoul)}, vol.~16, no.~2, pp.
  133--137, 2021.

\bibitem{SWM12}
M.~M. da~Silva, T.~Wigren, and T.~Mendonca, ``Nonlinear identification of a
  minimal neuromuscular blockade model in anesthesia,'' \emph{IEEE Transactions
  on Control Systems Technology}, vol.~20, no.~1, pp. 181--188, 2012.

\bibitem{GC98}
A.~K. Gelig and A.~N. Churilov, \emph{Stability and Oscillations of Nonlinear
  Pulse-modulated Systems}.\hskip 1em plus 0.5em minus 0.4em\relax Boston:
  Birkh{\"a}user, 1998.

\bibitem{Aut09}
A.~Churilov, A.~Medvedev, and A.~Shepeljavyi, ``Mathematical model of non-basal
  testosterone regulation in the male by pulse modulated feedback,''
  \emph{Automatica}, vol.~45, no.~1, pp. 78--85, 2009.

\bibitem{MPZh23}
A.~Medvedev, A.~V. Proskurnikov, and Z.~Zhusubaliyev, ``Design of the impulsive
  {G}oodwin's oscillator: A case study,'' in \emph{American Control
  Conference}, 2023, pp. 3572--3577.

\bibitem{MPZh23a}
------, ``Design of the impulsive {G}oodwin's oscillator in 1-cycle,'' in
  \emph{{IFAC} World Congress}, Jokohama, Japan, 2023.

\bibitem{MPZh24}
------, ``Output corridor control via design of impulsive {G}oodwin's
  oscillator,'' in \emph{Americal Control Conference}, Toronto, Canada, 2024,
  (submitted).

\bibitem{ZCM12b}
Z.~T. Zhusubaliyev, A.~Churilov, and A.~Medvedev, ``Bifurcation phenomena in an
  impulsive model of non-basal testosterone regulation,'' \emph{Chaos},
  vol.~22, no.~1, pp. 013\,121--1---013\,121--11, 2012.

\bibitem{RMM14}
O.~Ros{\'e}n, M.~Silva, and A.~Medvedev, ``Nonlinear estimation of a
  parsimonious {W}iener model for the neuromuscular blockade in closed-loop
  anesthesia,'' in \emph{19th World Congress, The International Federation of
  Automatic Control}, Cape Town, South Africa, 2014.

\end{thebibliography}
\bibliographystyle{IEEEtran}
\end{document}